\numberwithin{equation}{section}  
\theoremstyle{plain}
  \newtheorem{theorem}{Theorem}[section]
  \newtheorem{lemma}[theorem]{Lemma}
  \newtheorem{corollary}[theorem]{Corollary}
\theoremstyle{definition}
\theoremstyle{remark}
  \newtheorem{remark}[theorem]{Remark}
  \pgfplotsset{compat=newest}
  \setlist[itemize]{topsep=4pt,partopsep=2pt,itemsep=2pt}
  \setlist[enumerate]{topsep=4pt,partopsep=2pt,itemsep=2pt}
\title{Pr\"ufer Transformation and Spectral Analysis for a Sturm--Liouville-Type Equation}
\author{Shalmali Bandyopadhyay, F. Ay\c{c}a \c{C}etinkaya, Tom Cuchta}
\date{}
\begin{document}

\begin{abstract}
We study a second-order differential equation involving a quasi-derivative, leading to a non-self-adjoint Sturm--Liouville-type problem with four coefficient functions. To analyze this equation, we develop a generalized Pr\"ufer transformation that expresses solutions in terms of amplitude and phase variables. We further prove the monotonicity of eigenfunction zeros with respect to the spectral parameter and derive upper and lower bounds for the eigenvalues.
\end{abstract}
\maketitle
\noindent \textbf{Keywords:} Pr\"ufer transformation, Sturm--Liouville theory, non-self-adjoint operators, quasi-derivative, spectral analysis, eigenvalue bounds, comparison theorems, phase function

\noindent \textbf{Mathematics Subject Classification (2020):} 34L15, 34B24, 34L05

\section{Introduction}

We consider the second-order differential equation
\begin{equation}
\label{1}
-\left[p(x)(y'(x) + s(x)y(x))\right]' + s(x)p(x)(y'(x) + s(x)y(x)) + q(x)y(x) = 0,
\end{equation}
which is a non-self-adjoint Sturm--Liouville-type {equation} problem. We develop a generalized Pr\"ufer transformation that expresses solutions in terms of amplitude and phase variables. 

Sturm--Liouville theory is a fundamental pillar of mathematical analysis with applications spanning from quantum mechanics and vibration analysis to signal processing and mathematical physics. The classical Sturm--Liouville {equation} problem 
\begin{equation}
-[p(x)y']' + q(x)y = \lambda w(x)y,\label{classicalSL}
\end{equation}
with various boundary conditions has been extensively studied since the 19th century \cite{sturm1836, sturm1837}. The theory for \eqref{classicalSL} provides a complete characterization of eigenvalues and eigenfunctions, including orthogonality properties, completeness of eigenfunction expansions, and oscillation theorems \cite{coddington1955, titchmarsh1962}.

The classical approach to analyzing Sturm--Liouville problems relies on the self-adjoint nature of the differential operator to guarantee real eigenvalues and orthogonal eigenfunctions in an appropriate weighted inner product space \cite{zettl2005}. However, many practical applications give rise to problems that do not fit the standard self-adjoint framework. These include problems with complex coefficients, eigenparameter-dependent boundary conditions, or more general quasi-derivative operators. The Pr\"ufer transformation (\cite{prufer1926}) has proven to be a powerful tool for studying oscillation of solutions to second-order differential equations. Recent research used Pr\"ufer transformation techniques to establish comparison theorems, analyze oscillation behavior, and develop numerical methods for eigenvalue problems \cite{atkinson1964, brown1999, kong2001}. 

Pr\"ufer theory grew to more general settings. For instance, eigenvalue problems where the leading coefficient function changes sign \cite{kong1999, kong2003} and numerics of two-parameter Sturm--Liouville problems \cite{binding1993}. The application of Pr\"ufer transformations to periodic and semi-periodic boundary conditions has been studied by several authors \cite{menendez2001, harris2003}, demonstrating the versatility of this approach across different types of boundary value problems. 

The study of non-self-adjoint Sturm--Liouville problems has gained significant attention in recent years due to their relevance in modeling dissipative systems and problems with complex potentials \cite{bondarenko2020}. Unlike their self-adjoint counterparts, such problems may exhibit complex eigenvalues, non-orthogonal eigenfunctions, and more intricate spectral properties. Non-self-adjoint operators arise naturally in processes that proceed without conservation of energy, including problems with friction, the theory of open resonators, and problems of inelastic scattering. The spectral theory of such operators presents unique challenges since traditional variational methods which rely on the minimization of energy functionals and real-valued eigenvalues with orthogonal eigenfunctions are not directly applicable to \eqref{1}. Consequently, researchers have developed specialized techniques including resolvent analysis, asymptotic expansions, and modified variational approaches \cite{davies2007}.

Despite the extensive literature on Sturm--Liouville theory, several important gaps remain in our understanding and computational capabilities. While significant progress has been made in studying non-self-adjoint problems or quasi-derivative operators independently, the literature lacks an analysis that combines both features. Equation \eqref{1} has a non-self-adjoint operator with a quasi-derivative structure involving four coefficient functions, which has not yet been systematically analyzed using Pr\"ufer transformation techniques. Existing Pr\"ufer transformations are designed for self-adjoint problems or specific classes of non-self-adjoint equations. 

{We introduce a novel Pr\"ufer transformation adapted to the modified derivative structure of \eqref{1},} resulting in a nonlinear system that captures the essential spectral properties of the original problem. We establish existence and uniqueness results for the generalized Pr\"ufer system and derive monotonicity properties for eigenfunction zeros with respect to spectral parameters. We obtain sharp upper and lower bounds for eigenvalues using variational techniques adapted to our non-self-adjoint setting. 

The rest of the paper is organized as follows. In Section \ref{sec:prufer}, we develop the generalized Pr\"ufer coordinates and establish the fundamental equivalence between the original differential equation \eqref{1} and its associated Pr\"ufer system, proving existence and uniqueness properties through Lipschitz continuity analysis. Section \ref{sec:spectral} focuses on the spectral properties and applications of the Pr\"ufer phase equation, beginning with the decoupling of amplitude and phase variables and demonstrating how the phase function completely determines solution zeros. We then analyze eigenvalue problems including the monotonicity property of eigenfunction zeros with respect to the spectral parameter. The section concludes with variational techniques that yield sharp upper and lower bounds for eigenvalues, and presents an eigenvalue criterion expressed entirely in terms of the phase function.

\section{Generalized Pr\"ufer Coordinates and Associated Differential System}
\label{sec:prufer}
The Pr\"ufer substitution provides a powerful technique for transforming second-order differential equations into a system of first-order equations, enabling deeper analysis of oscillatory behavior and spectral properties. In this section, we develop the generalized Pr\"ufer coordinates for \eqref{1} and establish the fundamental equivalence between {the differential equation} and its associated Pr\"ufer system.
\subsection{The Pr\"ufer transformation}
{For equation \eqref{1}, we introduce the Pr\"ufer substitution defined by}
\begin{equation}\label{2} 
p(x) \big(y^\prime (x)+s(x)y(x)\big)=-r(x)\cos\theta (x),
\end{equation} 
\begin{equation} \label{3} y(x)=r(x)\sin\theta (x),
\end{equation}
where $r$ and $\theta$ denote the \textit{amplitude} and \textit{phase} variables, respectively, given by
\begin{equation} \label{r}
r^2 (x)=y^2 (x)+\big[p(x)\big(y^\prime (x)+s(x)y(x)\big)\big]^2
\end{equation}
and 
\begin{equation} \label{theta}
\theta(x)=\arctan \bigg(\dfrac{y(x)}{-p(x) \big(y^\prime (x)+s(x)y(x)\big)}\bigg).
\end{equation}

Our goal is to derive a system of differential equations for $r$ and $\theta$ that is equivalent to \eqref{1}. For convenience, define 
\begin{equation} \label{*}
u(x)=y^\prime (x)+s(x)y(x).
\end{equation}
Then \eqref{2} becomes 
\begin{equation} \label{**}
p(x)u(x)=-r(x)\cos  \theta (x) .
\end{equation}
Now, consider \eqref{3} with 
\begin{equation} \label{4}
u(x)=-\dfrac{r(x)}{p(x)}\cos\theta(x).
\end{equation}
Differentiate \eqref{3} to obtain
\[y^\prime (x)=r^\prime (x) \sin\theta(x) +r(x)\theta'(x) \cos\theta(x).\]
From \eqref{3}, \eqref{*}, and \eqref{4}, we have 
\[y'(x) = u(x)-s(x)y(x) = -\dfrac{r(x)}{p(x)}\cos \theta (x)-s(x)r(x)\sin \theta(x).\]
Therefore, 
\begin{equation} \label{5}
r^\prime (x) \sin\theta(x)+r(x)\theta'(x) \cos\theta(x)=-\dfrac{r(x)}{p(x)}\cos\theta(x)-s(x)r(x)\sin\theta(x).
\end{equation}
Now substitute \eqref{2} and \eqref{3} into \eqref{1} and simplify to get
\begin{equation} \label{6}
r^\prime (x) \cos\theta(x)-r(x)\theta'(x)\sin\theta(x)-s(x)r(x)\cos\theta(x)+q(x)r(x)\sin\theta(x)=0.
\end{equation}
Multiply \eqref{5} by $\sin\theta(x)$, multiply \eqref{6} by $\cos\theta(x)$, and add together the resulting equations to attain
\begin{align} \label{7}
r'(x) ={}&
r(x)\Bigg[
-\frac{\cos\theta(x)\sin\theta(x)}{p(x)}
- s(x)\sin^2\theta(x) \nonumber\\
&\qquad
+ s(x)\cos^2\theta(x)
- q(x)\sin\theta(x)\cos\theta(x)
\Bigg].
\end{align}
To isolate $\theta^\prime(x)$, multiply \eqref{5} by $\cos\theta(x)$, \eqref{6} by $\sin\theta(x)$, and subtract the resulting equations to get 
\begin{align} \label{8}
\theta'(x)=-\dfrac{\cos^2\theta(x)}{p(x)} - 2s(x)\sin\theta(x)\cos\theta(x)+q(x)\sin^2\theta(x).
\end{align}

\subsection{The fundamental equivalence}
The system \eqref{7}--\eqref{8} is called the \textit{Pr\"ufer system} associated with \eqref{1}. It provides a complete characterization of the solutions to the original equation, as formalized in the following fundamental theorem.

\begin{theorem}[Equivalence of Pr\"ufer system and original system]
\label{thm:equivalence}
If $r$ and $\theta$ are solutions to \eqref{7}--\eqref{8}, then the function $y$ defined by \eqref{3} solves \eqref{1}. Also, conversely, if $y$ is a nontrivial $C^2$ solution to \eqref{1}, then there exist functions $r$ and $\theta$ satisfying \eqref{7}--\eqref{8} such that \eqref{2} and \eqref{3} hold with $r(x)\neq 0$ for $x \in [a,b]$. 
\end{theorem}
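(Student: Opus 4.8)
The plan is to prove the two implications separately, using the single structural fact that the passage from \eqref{5}--\eqref{6} to \eqref{7}--\eqref{8} is effected by an invertible rotation. Writing $C_5$ and $C_6$ for the right-hand sides of \eqref{5} and \eqref{6}, those two identities read $r'\sin\theta+r\theta'\cos\theta=C_5$ and $r'\cos\theta-r\theta'\sin\theta=C_6$, that is, $M\,(r',r\theta')^{\mathsf T}=(C_5,C_6)^{\mathsf T}$ with $M=\left(\begin{smallmatrix}\sin\theta & \cos\theta\\ \cos\theta & -\sin\theta\end{smallmatrix}\right)$. Since $\det M=-1$ and $M^2=I$, this linear map is its own inverse, and \eqref{7}--\eqref{8} are precisely $(r',r\theta')^{\mathsf T}=M\,(C_5,C_6)^{\mathsf T}$. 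For the forward implication, suppose $r,\theta$ solve \eqref{7}--\eqref{8} and set $y=r\sin\theta$. Applying $M$ to \eqref{7}--\eqref{8} and using $M^2=I$ recovers \eqref{5}--\eqref{6}. Now \eqref{5} is exactly the identity $y'=C_5$ obtained by differentiating $y=r\sin\theta$, so it gives $y'+sy=-\tfrac{r}{p}\cos\theta$, which is \eqref{2}; and \eqref{6}, after using \eqref{2}--\eqref{3} to rewrite its right-hand side as $C_6=-s\,y\,(y'+sy)-qy$ and its left-hand side as $-[p(y'+sy)]'$, is precisely \eqref{1}. Thus no computation beyond this retracing is needed.

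For the converse, let $y$ be a nontrivial $C^2$ solution, put $u=y'+sy$, and define $r\ge 0$ by \eqref{r} and $\theta$ by \eqref{theta}. The first step is to show $r>0$ on $[a,b]$. Since $r^2=y^2+(pu)^2$, a zero of $r$ at some $x_0$ would force $y(x_0)=0$ and $u(x_0)=0$, hence $y(x_0)=y'(x_0)=0$. Solving \eqref{1} for $y''$ puts it in the normal form $y''=F(x,y,y')$, where $F$ is a quadratic polynomial in $(y,y')$ with continuous coefficients and is therefore locally Lipschitz; the uniqueness theorem for initial value problems then forces $y\equiv 0$, contradicting nontriviality. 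Hence $r>0$ throughout.

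Because $r$ never vanishes, the planar curve $(-pu,y)=r(\cos\theta,\sin\theta)$ avoids the origin, so a continuous---indeed $C^1$---lift $\theta$ of the phase exists; formula \eqref{theta}, which determines $\theta$ only modulo $\pi$, is to be read as this lift. It then remains to verify that this $(r,\theta)$ satisfies \eqref{7}--\eqref{8}, which is the computation of this section run forward: differentiating \eqref{3} and comparing with $y'=u-sy=-\tfrac{r}{p}\cos\theta-sr\sin\theta$ yields \eqref{5}, substituting \eqref{2}--\eqref{3} into \eqref{1} yields \eqref{6}, and applying $M$ produces \eqref{7} and \eqref{8}.

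The main obstacle is the nonvanishing of $r$ together with the attendant global definition of the phase, both of which reduce to the uniqueness of solutions of \eqref{1}. The only point deserving genuine care is that \eqref{1} is nonlinear: the term $s\,y\,(y'+sy)$ is quadratic in $y$, so uniqueness cannot be quoted from linear theory but rests on the local Lipschitz property of the normal-form right-hand side $F$, which holds because $F$ is polynomial in $(y,y')$ with coefficients assembled from $p,p',s,s',q$ and $1/p$. Once $r>0$ is secured, the rest is the invertibility of the rotation $M$ and a bookkeeping of identities already established, so I expect the write-up to be short.
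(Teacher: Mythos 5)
Your proposal is correct and follows the same basic route as the paper's proof (direct substitution in both directions), but it is organized more transparently and is more rigorous at the one point where the paper's argument is thin. The observation that \eqref{5}--\eqref{6} and \eqref{7}--\eqref{8} are related by the involution $M=\left(\begin{smallmatrix}\sin\theta & \cos\theta\\ \cos\theta & -\sin\theta\end{smallmatrix}\right)$ with $M^2=I$ is exactly the algebra the paper performs by the ``multiply by $\sin\theta$/$\cos\theta$ and add/subtract'' manipulations, so the forward direction is the same computation. The genuine added value is in the converse: the paper merely asserts that $r(x_0)=0$ ``cannot occur for a nontrivial solution,'' whereas you supply the actual reason --- $r(x_0)=0$ forces $y(x_0)=y'(x_0)=0$, and uniqueness for the initial value problem then kills $y$. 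You are also right to flag that \eqref{1} is nonlinear in $y$, so this uniqueness must come from a local Lipschitz estimate rather than from linear theory. One caveat: writing \eqref{1} in normal form $y''=F(x,y,y')$ requires $p$ and $s$ to be differentiable, which the paper's standing hypotheses ($p,s,q\in C[a,b]$) do not guarantee; this is avoided by instead passing to the first-order system in $(y,v)$ with $v=p(y'+sy)$, namely $y'=v/p-sy$, $v'=\tfrac{s}{p}yv+qy$, whose right-hand side is polynomial in $(y,v)$ with continuous coefficients and hence locally Lipschitz. Your remark that \eqref{theta} only defines $\theta$ modulo $\pi$ and must be read as a continuous lift of the nonvanishing curve $(-pu,y)$ is likewise a point the paper glosses over. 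With the system formulation substituted for the normal form, the write-up is complete.
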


\begin{proof}
First suppose $r(x)$ and $\theta (x)$ solve \eqref{7}--\eqref{8} and define $y(x)$ by \eqref{3}. We verify that $y(x)$ satisfies \eqref{1} by direct substitution. Computing the derivative of \eqref{2} and multiplying by $-1$ yields
\[-\dfrac{\mathrm{d}}{\mathrm{d}x}\Big[ p(x)\big(y'(x)+s(x)y(x)\big)\Big]=\big(r(x) \cos\theta(x)\big)^\prime=r'(x) \cos\theta(x)-r(x)\theta'(x)\sin\theta(x).\]
Substituting \eqref{2} and \eqref{3} into the remaining terms of \eqref{1} reveals
\[s(x)p(x)(y'(x)+s(x)y(x)) = -r(x)s(x)\cos\theta(x),\]
and
\[q(x)y(x)=q(x)r(x)\sin\theta(x).\]

The left-hand side of \eqref{1} becomes
\begin{equation} 
r'(x)\cos\theta(x)-r(x)\theta'(x)\sin\theta(x)-r(x)s(x)\cos\theta(x)+q(x)r(x)\sin\theta(x)=0.
\label{thm2.1formula}
\end{equation}
Now substitute \eqref{7} and \eqref{8} into \eqref{thm2.1formula} and simplify to confirm that $y$ satisfies \eqref{1}.

Conversely, suppose $y(x)$ is a nontrivial $C^2$ solution to \eqref{1} on interval $[a,b]$. We define functions $r(x)$ and $\theta (x)$ on $[a,b]$ using \eqref{r}--\eqref{theta}. By construction, the Pr\"ufer substitution \eqref{2}--\eqref{3} is satisfied. We now verify that $r(x)$ and $\theta (x)$ satisfy \eqref{7}--\eqref{8}.

From \eqref{2}, we have $y(x) = r(x) \sin\theta(x)$. Differentiating with respect to $x$ yields
\begin{equation}\label{eq:y-prime}
y'(x) = r'(x)\sin\theta(x) + r(x)\theta'(x)\cos\theta(x).
\end{equation}
From \eqref{3}, we have $p(x)(y'(x) + s(x)y(x)) = -r(x)\cos\theta(x)$, which gives
\begin{equation}\label{eq:py}
p(x)y'(x) = -r(x)\cos\theta(x) - p(x)s(x)y(x).
\end{equation}
Substituting $y(x) = r(x)\sin\theta (x)$ into \eqref{eq:py}, we obtain
\begin{equation}\label{eq:py-expanded}
p(x)y'(x) = -r(x)\cos\theta(x) - p(x)s(x)r(x)\sin\theta(x),
\end{equation}
and multiplying equation \eqref{eq:y-prime} by $p(x)$ and equating with \eqref{eq:py-expanded} gives
\begin{equation}\label{eq:intermediate-1}
p(x)r'(x)\sin\theta(x) + p(x)r(x)\theta'(x)\cos\theta(x) + p(x)s(x)r(x)\sin\theta(x) + r(x)\cos\theta(x) = 0.
\end{equation}
Dividing by $p$ yields
\begin{equation}\label{eq:star-star}
r'(x)\sin\theta(x) + r(x)\theta'(x)\cos\theta(x) + s(x)r(x)\sin\theta(x) = -\frac{r(x)\cos\theta(x)}{p(x)}.
\end{equation}

We derive a second equation by differentiating \eqref{3} with respect to $x$. From \eqref{1} and the Pr\"ufer substitution, after algebraic manipulation we obtain
\begin{equation}\label{eq:system-ii}
r'(x)\cos\theta (x)- r(x)\theta'(x)\sin\theta(x) = r(x)s(x)\cos\theta(x) - q(x)r(x)\sin\theta(x). \tag{ii}
\end{equation}
To verify \eqref{eq:system-ii}, we use \eqref{1} and the Pr\"ufer substitution. Using the relations
\[
p(x)y'(x) = -r(x)\cos\theta(x) - p(x)s(x)r(x)\sin\theta(x) \quad \text{and} \quad y (x)= r(x)\sin\theta(x)
\]
we have 
\[
y'(x) = r'(x)\sin\theta(x) + r(x)\theta'(x)\cos\theta(x).
\]
The equation \eqref{1} can be written as
\begin{equation*}
-[p(x)(y'(x) + s(x)y(x))]' + s(x)p(x)(y'(x) + s(x)y(x)) + q(x)y(x) = 0.
\end{equation*}
Substituting the Pr\"ufer expressions and simplifying yields \eqref{eq:system-ii}.

Now, to obtain equation \eqref{8}, we eliminate $r'(x)$ from equations \eqref{eq:star-star} and \eqref{eq:system-ii}. Multiply \eqref{eq:star-star} by $\cos\theta(x)$ and \eqref{eq:system-ii} by $\sin\theta(x)$:
\begin{align*}
r'(x)\sin\theta(x)\cos\theta(x) + r(x)\theta'(x)\cos^2\theta(x) &= -\frac{r(x)\cos^2\theta(x)}{p(x)} - s(x)r(x)\sin\theta(x)\cos\theta(x)\\
r'(x)\cos\theta(x)\sin\theta (x)- r(x)\theta'(x)\sin^2\theta(x) &= r(x)s(x)\cos\theta(x)\sin\theta(x) - q(x)r(x)\sin^2\theta(x).
\end{align*}
Subtracting these equations:
\begin{equation*}
\begin{aligned}
r(x)\theta'(x)\bigl(\cos^2\theta(x) + \sin^2\theta(x)\bigr)
={}&
-\frac{r(x)\cos^2\theta(x)}{p(x)}
- s(x)r(x)\sin\theta(x)\cos\theta(x) \\
&\quad
- r(x)s(x)\cos\theta(x)\sin\theta(x)
+ q(x)r(x)\sin^2\theta(x).
\end{aligned}
\end{equation*}
Simplifying:
\begin{equation*}
r(x)\theta'(x) = -\frac{r(x)\cos^2\theta(x)}{p(x)} - 2s(x)r(x)\sin\theta(x)\cos\theta(x)+ q(x)r(x)\sin^2\theta(x).
\end{equation*}
Dividing by $r(x)$ (which is nonzero for a nontrivial solution):
\begin{equation}\label{eq:theta-prime-derived}
\theta'(x) = -\frac{\cos^2\theta(x)}{p(x)} -2s(x)\sin\theta(x)\cos\theta(x)+ q(x)\sin^2\theta(x),
\end{equation}
which is equation \eqref{8}.

To obtain equation \eqref{7}, we eliminate $\theta'$ from equations \eqref{eq:star-star} and \eqref{eq:system-ii}. Multiply \eqref{eq:star-star} by $\sin\theta(x)$ and \eqref{eq:system-ii} by $\cos\theta(x)$, then add:
\begin{align*}
r'(x)\sin^2\theta(x) + r(x)\theta'(x)\cos\theta(x)\sin\theta(x) &= -\frac{r(x)\cos\theta(x)\sin\theta(x)}{p(x)} - s(x)r(x)\sin^2\theta(x)\\
r'(x)\cos^2\theta(x) - r(x)\theta'(x)\sin\theta(x)\cos\theta(x) &= r(x)s(x)\cos^2\theta(x) - q(x)r(x)\sin\theta(x)\cos\theta(x).
\end{align*}
Adding these equations:
\begin{equation*}
\begin{aligned}
r'(x)\bigl(\sin^2\theta(x) + \cos^2\theta(x)\bigr)
={}&
-\frac{r(x)\cos\theta(x)\sin\theta(x)}{p(x)}
- s(x)r(x)\sin^2\theta(x) \\
&\quad
+ r(x)s(x)\cos^2\theta(x)
- q(x)r(x)\sin\theta(x)\cos\theta(x).
\end{aligned}
\end{equation*}
Simplifying:
\begin{equation*}
r' (x)= r(x)\left[-\frac{\cos\theta(x)\sin\theta(x)}{p(x)} + s(x)(\cos^2\theta(x) - \sin^2\theta(x)) - q(x)\sin\theta(x)\cos\theta(x)\right],
\end{equation*}
which is equation \eqref{7}, as was to be shown.
\end{proof}

\subsection{Existence and uniqueness}
To ensure the well-posedness of the Pr\"ufer system, we establish the Lipschitz continuity of the phase equation, which guarantees unique solutions.
\begin{lemma}[Lipschitz Continuity]
\label{lem22}
Let 
\[
f(x,\theta)=-\dfrac{\cos^2\theta}{p(x)}-2s(x)\sin\theta\cos\theta+q(x) \sin^2\theta,\]
where the coefficient functions $p, s, q \in C[a,b]$ and $p>0$ on $[a,b]$. Then for each fixed $x \in [a,b]$, the function $\theta \mapsto f(x,\theta)$ is Lipschitz continuous with Lipschitz constant:
\[\hat{L}=\sup_{x \in [a,b]} \left\{ \frac{1}{p(x)} + 2|s(x)| + |q(x)| \right\}.\]
\end{lemma}
\begin{proof}
Compute the partial derivative of $f(x,\theta(x))$ with respect to $\theta(x)$ to get
\[\frac{\partial f(x,\theta)}{\partial \theta} = \frac{\sin (2\theta)}{p(x)} - 2 s(x) \cos(2\theta) + q(x) \sin(2\theta).\]
Using the triangle inequality, we obtain the stated Lipschitz constant, completing the proof.
\end{proof}
\noindent This Lipschitz continuity immediately implies the following existence and uniqueness result.
\begin{corollary}[Global existence and uniqueness]
Under the continuity assumptions of Lemma~\ref{lem22}, for any initial value $\theta(x_0)=\theta_0$ with $x_0 \in [a,b]$, the phase equation \eqref{8} has a unique solution $\theta$ defined on all of $[a,b]$.
\end{corollary}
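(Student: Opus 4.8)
The plan is to recognize the phase equation \eqref{8} as an initial value problem $\theta'(x)=f(x,\theta)$ with $\theta(x_0)=\theta_0$, whose right-hand side has already been shown in Lemma~\ref{lem22} to be globally Lipschitz in $\theta$, and then to invoke the Picard--Lindel\"of framework in the strengthened, \emph{global} form that a uniform Lipschitz bound permits. Because the Lipschitz constant $L(x)$ does not depend on $\theta$, the obstruction that normally confines Picard--Lindel\"of to a small neighborhood disappears, and a single fixed-point argument can be carried out on the entire interval $[a,b]$.

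First I would record the two structural facts needed. (i) The map $(x,\theta)\mapsto f(x,\theta)$ is jointly continuous, since $p,s,q$ are continuous with $p>0$, the function $r$ is continuous, and the trigonometric factors are smooth, so each summand of $f$ is continuous on $[a,b]\times\R$. (ii) The Lipschitz constant
\[
L(x)=\frac{1}{p(x)}+|s(x)|+|q(x)|+\frac{3|s(x)r(x)|}{p(x)}
\]
is itself continuous on the compact interval $[a,b]$ (using $p>0$), hence bounded; set $L:=\max_{x\in[a,b]}L(x)<\infty$. Thus $f$ is globally Lipschitz in $\theta$ with the single constant $L$, uniformly in $x$.

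Next I would convert the problem to the integral equation $\theta(x)=\theta_0+\int_{x_0}^x f(t,\theta(t))\,dt$ and define the Picard operator $(T\theta)(x)$ by this right-hand side, acting on $C[a,b]$. The key step, which replaces the usual local restriction, is to equip $C[a,b]$ with a Bielecki (weighted supremum) norm $\|\phi\|_B=\sup_{x\in[a,b]} e^{-2L|x-x_0|}|\phi(x)|$. Using the global Lipschitz bound one checks that $T$ is a contraction with respect to $\|\cdot\|_B$ on all of $C[a,b]$, with contraction factor $\tfrac12$, so the Banach fixed-point theorem yields a unique fixed point $\theta\in C[a,b]$; this fixed point is the desired solution on the whole interval, and it is in fact $C^1$ because $f(\cdot,\theta(\cdot))$ is continuous.

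Finally, for uniqueness I would note that any two solutions agreeing at $x_0$ satisfy, via the Lipschitz estimate, $|\theta_1(x)-\theta_2(x)|\le L\left|\int_{x_0}^x |\theta_1(t)-\theta_2(t)|\,dt\right|$, and Gronwall's inequality then forces their difference to vanish identically. The only point requiring care, namely the passage from local to global solvability, is handled entirely by the fact that Lemma~\ref{lem22} provides a Lipschitz constant independent of $\theta$; equivalently, since the trigonometric terms are bounded and the coefficients are bounded on $[a,b]$, the function $f$ itself is bounded, so $|\theta'|$ is bounded and no finite-time blow-up can occur. I therefore expect no genuine obstacle beyond bookkeeping, as the substantive estimate was already carried out in the lemma.
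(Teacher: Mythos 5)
Your proposal is correct and follows essentially the same route as the paper, which offers no explicit proof at all and simply asserts that the result follows immediately from the Lipschitz continuity established in Lemma~\ref{lem22}; you have merely supplied the standard Picard--Lindel\"of details (Bielecki norm, contraction, Gronwall) that the authors leave implicit. The one point worth keeping in mind is that, as in the lemma, $r$ must be treated as a \emph{given} continuous function rather than as the coupled amplitude variable, which is consistent with how the paper frames the hypothesis.
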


\section{Spectral properties and applications of the Pr\"ufer phase equation}
\label{sec:spectral}
In this section, we demonstrate how the phase function $\theta(x)$ completely determines the zeros of solutions.

The phase function $\theta(x)$ is determined by solving \eqref{8}, and hence the amplitude equation \eqref{7} becomes a first-order differential equation of the form
\begin{equation*} 
r^\prime(x) = r(x) F(x),
\end{equation*}
where
\[
\begin{aligned}
F(x) ={}&
-\frac{\cos\theta(x)\,\sin\theta(x)}{p(x)}
- s(x)\sin^2\theta(x) \\
&\quad
+ s(x)\cos^2\theta(x)
- q(x)\sin\theta(x)\cos\theta(x).
\end{aligned}
\]
Hence $r(x)$ is an exponential which never vanishes. Therefore by \eqref{3}, the zeros of solutions are completely determined by the phase function, independent of the amplitude. Since $y(x) = r(x)\sin\theta(x)$ and $r > 0$, we have $y(x) = 0$ precisely when $\sin\theta(x) = 0$, which occurs when $\theta(x) \in \pi \mathbb{Z}$.

This enables direct comparison of oscillatory behavior between different equations by comparing their respective phase equations. The Pr\"ufer approach gives a framework for establishing oscillation and comparison theorems.
\begin{remark}
The Sturm oscillation theorem does not directly generalize to \eqref{1}. 
To illustrate this, we introduce the notation
\[
y^{[1]} := p(x)\bigl(y'(x)+s(x)y(x)\bigr),
\]
and, for $i=1,2$, set $y_i^{[1]} := p(x)\bigl(y_i'(x)+s(x)y_i(x)\bigr)$.
With this notation, equation~\eqref{1} is equivalent to
\begin{equation}\label{sturm_osc}
-\bigl(y_i^{[1]}\bigr)' + s(x)y_i^{[1]} + q_i(x)y_i = 0 .
\end{equation}

Assume that $q_1<q_2$, and let $x_1$ and $x_2$ be two consecutive zeros of a nontrivial solution $y_1$ of \eqref{sturm_osc}.
Define the Wronskian-like function
\[
W(x) := y_1(x)y_2^{[1]}(x) - y_2(x)y_1^{[1]}(x).
\]
A direct computation gives
\[
\begin{aligned}
W'(x)
&= y_1'(x)y_2^{[1]}(x) + y_1(x)\bigl(y_2^{[1]}(x)\bigr)'
   - y_2'(x)y_1^{[1]}(x) - y_2(x)\bigl(y_1^{[1]}(x)\bigr)' .
\end{aligned}
\]

Using \eqref{sturm_osc}, we have
\[
\bigl(y_i^{[1]}\bigr)' = s(x)y_i^{[1]} + q_i(x)y_i ,
\]
and substituting this into the expression for $W'$ yields
\[
W'(x) = \bigl(q_2(x)-q_1(x)\bigr)y_1(x)y_2(x).
\]
Integrating from $x_1$ to $x_2$ gives
\[
W(x_2)-W(x_1)
= \int_{x_1}^{x_2} \bigl(q_2(x)-q_1(x)\bigr)y_1(x)y_2(x)\,\mathrm{d}x .
\]

By the definition of $W$, this identity becomes
\[
- y_2(x_2)y_1^{[1]}(x_2) + y_2(x_1)y_1^{[1]}(x_1)
= \int_{x_1}^{x_2} \bigl(q_2(x)-q_1(x)\bigr)y_1(x)y_2(x)\,\mathrm{d}x .
\]

Suppose that $y_2$ does not vanish on $(x_1,x_2)$.
Without loss of generality, assume that $y_1$ and $y_2$ are positive on $(x_1,x_2)$.
Since $y_1(x_1)=0$ and $y_1>0$ on $(x_1,x_2)$, it follows that $y_1$ is increasing at $x_1$, and hence
\[
y_1^{[1]}(x_1) = p(x_1)y_1'(x_1).
\]
Similarly, since $x_2$ is also a zero of $y_1$, we obtain
\[
y_1^{[1]}(x_2) = p(x_2)y_1'(x_2).
\]
Because $p>0$, the functions $y_1$ and $y_1^{[1]}$ have the same sign at both $x_1$ and $x_2$, with
\[
y_1^{[1]}(x_1)>0,
\qquad
y_1^{[1]}(x_2)<0.
\]
Consequently, $W(x_1)<0$ and $W(x_2)>0$.
By continuity, there exists $x^*\in(x_1,x_2)$ such that $W(x^*)=0$.

By definition of $W$, this implies
\[
y_1(x^*)y_2^{[1]}(x^*) = y_2(x^*)y_1^{[1]}(x^*).
\]
If one could conclude that $y_2(x^*)=0$, this would yield a contradiction and the proof would be complete. However, if $y_1^{[1]}(x^*)=0$, then
\[
y_1(x^*)y_2^{[1]}(x^*) = 0.
\]
Since, by assumption, $y_1(x^*)>0$, we must have $y_2^{[1]}(x^*)=0$, that is,
\[
p(x^*)\bigl(y_2'(x^*)+s(x^*)y_2(x^*)\bigr)=0,
\]
or equivalently,
\[
s(x^*) = -\frac{y_2'(x^*)}{y_2(x^*)}.
\]
This condition cannot be imposed a priori, since the choice of $s$ determines the solution $y_2$.
\end{remark}

\subsection{Eigenvalue Problems and Spectral Analysis}
Replace $q$ with $q - \lambda \omega$ in \eqref{1} for a given weight function $\omega > 0$ to get the eigenvalue problem
\begin{equation} \label{14}
-\left(p(x)\left(y'(x) + s(x)y(x)\right)\right)' + s(x)p(x)\left(y'(x) + s(x)y(x)\right) + q(x)y(x) = \lambda \omega(x)y(x).
\end{equation}

The theorem below shows that zeros of solutions to the eigenvalue problem move leftward as the eigenvalue parameter increases, a fundamental monotonicity property in Sturm-Liouville theory. The proof uses the Pr\"ufer transformation to convert the second-order eigenvalue problem into a first-order phase equation, where the dependence on the eigenvalue parameter $\lambda$ becomes explicit and monotonic. By applying comparison theorems to the resulting phase equations, we show that larger eigenvalues produce faster phase accumulation, causing zeros to occur at earlier positions in the interval.

\begin{theorem}[Monotonicity of zeros with respect to \(\lambda\)]
\label{thm:monotonicity}
Let \( p, s, q, \omega \in C([a,b]) \) with \( p > 0 \) and \( \omega > 0 \) on \( [a,b] \). Consider the eigenvalue problem \eqref{14} with the boundary condition \( y(a) = 0 \) and an initial slope \( y'(a) > 0 \). For each \( \lambda \), let \( y_\lambda(x) \) be the corresponding nontrivial solution, and denote its successive zeros by
\[
x_1(\lambda) < x_2(\lambda) < \cdots,
\]
with each \( x_k(\lambda) \in (a,b] \). Then, for any \( \mu > \lambda \), the zeros of \( y_\mu(x) \) satisfy
\[
x_k(\mu) < x_k(\lambda) \quad \text{for all } k.
\]
\end{theorem}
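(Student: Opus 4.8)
The plan is to reduce \eqref{14} to its Pr\"ufer phase equation and then compare the phase functions $\theta_\lambda$ and $\theta_\mu$ for $\mu>\lambda$ via the first-order comparison principle, in the spirit of the Sturm oscillation theorem proved above. Substituting $q \mapsto q-\lambda\omega$ into \eqref{8}, the phase for parameter $\lambda$ obeys
\[
\theta_\lambda'(x) = -\frac{\cos^2\theta_\lambda}{p(x)} - s(x)\sin\theta_\lambda\cos\theta_\lambda + \big(q(x)-\lambda\omega(x)\big)\sin^2\theta_\lambda - \frac{s(x)r_\lambda(x)}{p(x)}\sin^2\theta_\lambda\cos\theta_\lambda .
\]
The boundary data $y(a)=0$, $y'(a)>0$ fix, through \eqref{2}--\eqref{3}, a common initial phase $\theta_\lambda(a)=\theta_0$ (a fixed odd multiple of $\pi$, since $p(a)y'(a)=-r(a)\cos\theta_0>0$ forces $\cos\theta_0<0$), independent of $\lambda$. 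Because $y_\lambda(x)=r_\lambda(x)\sin\theta_\lambda(x)$ with $r_\lambda>0$, the zeros of $y_\lambda$ are exactly the points where $\theta_\lambda\in\pi\mathbb{Z}$; and since the leading term $-\cos^2\theta_\lambda/p$ is negative, $\theta_\lambda$ decreases monotonically through consecutive multiples of $\pi$, so the $k$-th zero is characterized by $\theta_\lambda(x_k(\lambda))=\theta_0-k\pi$.

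The decisive structural feature is that $\lambda$ enters the phase equation only through $-\lambda\omega(x)\sin^2\theta_\lambda$, and since $\omega>0$ this term becomes more negative as $\lambda$ grows. First I would try to establish the pointwise comparison $\theta_\mu'(x)\le\theta_\lambda'(x)$ at every point where $\theta_\mu(x)=\theta_\lambda(x)$. Granting this, the scalar comparison principle together with $\theta_\mu(a)=\theta_\lambda(a)=\theta_0$ yields $\theta_\mu(x)\le\theta_\lambda(x)$ on all of $[a,b]$; because $\theta_\mu$ descends at least as fast, it reaches each level $\theta_0-k\pi$ no later than $\theta_\lambda$, and a strict-inequality check upgrades this to $x_k(\mu)<x_k(\lambda)$ for every $k$.

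The main obstacle is the amplitude-coupling term $-\tfrac{s r_\lambda}{p}\sin^2\theta_\lambda\cos\theta_\lambda$. Since $r_\mu\neq r_\lambda$ in general, at a crossing point $\theta_\mu=\theta_\lambda=:\theta$ one finds
\[
\theta_\mu'-\theta_\lambda' = -(\mu-\lambda)\,\omega\,\sin^2\theta - \frac{s\,(r_\mu-r_\lambda)}{p}\,\sin^2\theta\cos\theta ,
\]
whose second summand has indeterminate sign, so the clean inequality $\theta_\mu'\le\theta_\lambda'$ is not automatic --- this is precisely where the problem departs from the classical self-adjoint theory, in which the phase equation decouples from the amplitude. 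I would resolve it by exploiting that the coupling term carries a factor $\sin^2\theta$ and hence \emph{vanishes at every zero} of the eigenfunction, where moreover $\theta_\lambda'=-1/p<0$. Treating $x_k(\lambda)$ through the implicit relation $\theta_\lambda(x_k(\lambda))=\theta_0-k\pi$ gives $x_k'(\lambda)=-\partial_\lambda\theta_\lambda(x_k)/\theta_\lambda'(x_k)$, so it suffices to prove $\partial_\lambda\theta_\lambda<0$ at the zero. This I would obtain by integrating the variational equation for $v:=\partial_\lambda\theta_\lambda$, a linear ODE whose inhomogeneous forcing is $-\omega\sin^2\theta_\lambda\le0$, controlling the contribution of $\partial_\lambda r_\lambda$ by a Gronwall estimate. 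I expect this bookkeeping of the $r$-dependent term --- rather than the comparison principle itself --- to be the hardest part of the argument.
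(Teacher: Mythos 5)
Your overall strategy is the same as the paper's: pass to the Pr\"ufer phase equation, observe that $\lambda$ enters through the $\omega\sin^2\theta$ term, and invoke the scalar comparison principle to conclude that the phase of $y_\mu$ reaches each relevant multiple of $\pi$ before that of $y_\lambda$. You are in fact more careful than the paper on two points: with the sign convention of \eqref{2}, the initial phase determined by $y(a)=0$, $y'(a)>0$ is an odd multiple of $\pi$ (not $0$), and substituting $q\mapsto q-\lambda\omega$ into \eqref{8} produces $-\lambda\omega\sin^2\theta$, so that $\theta$ descends faster for larger $\lambda$, consistent with $\theta'=-1/p<0$ at zeros. The paper's proof carries the opposite signs in both places, and its conclusion survives only because the two slips cancel.

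The substantive issue is the one you flag yourself: the amplitude-coupling term $-\frac{s r}{p}\sin^2\theta\cos\theta$ makes the phase equation depend on $\lambda$ through $r_\lambda$ as well, so at a crossing $\theta_\mu=\theta_\lambda$ the difference $\theta_\mu'-\theta_\lambda'$ contains $-\frac{s(r_\mu-r_\lambda)}{p}\sin^2\theta\cos\theta$, which is not sign-definite. The paper's proof silently ignores this (it asserts that the only $\lambda$-dependent term is $\lambda\omega\sin^2\theta$), so you have identified a genuine gap in the published argument. However, your proposed repair does not close it: in the variational equation for $v=\partial_\lambda\theta$, the forcing is $-\omega\sin^2\theta_\lambda-\frac{s}{p}\sin^2\theta_\lambda\cos\theta_\lambda\,\partial_\lambda r_\lambda$; a Gronwall estimate bounds the \emph{magnitude} of $\partial_\lambda r_\lambda$ but cannot determine the \emph{sign} of $v$, and without a pointwise bound such as $|s\cos\theta\,\partial_\lambda r|/p<\omega$ (which the hypotheses do not supply) you cannot conclude $v<0$ at the zero. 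A clean way out --- available in the paper but only deployed later, for the eigenvalue bounds --- is the substitution $v(x)=e^{\int_a^x s(t)\,\mathrm{d}t}y(x)$, which converts \eqref{14} into the self-adjoint problem \eqref{selfadjoint}; its classical Pr\"ufer phase equation is genuinely decoupled from the amplitude, the comparison argument then goes through verbatim, and since $e^{\int_a^x s(t)\,\mathrm{d}t}>0$ the zeros of $v$ and $y$ coincide. As written, your proof (like the paper's) is incomplete at exactly the step you predicted would be hardest.
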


\begin{proof}
For each value of the spectral parameter $\lambda$, the eigenvalue problem \eqref{14} with the initial condition $y(a) = 0$, $y'(a) > 0$ determines a unique solution $y_\lambda(x)$. Let $y_\lambda(x) = r_\lambda(x)\sin(\theta_\lambda(x))$ denote its Pr\"ufer representation. The initial condition $y(a) = 0$ implies $\theta_\lambda(a) \in \pi\mathbb{Z}$; we normalize by taking $\theta_\lambda(a) = \theta_\mu(a) = \alpha$ for some fixed value $\alpha$.

Substituting $q(x) - \lambda\omega(x)$ for $q(x)$ in the phase equation \eqref{8}, we obtain
\[
\theta'(x) = f_\lambda(x, \theta) := -\frac{\cos^2\theta(x)}{p(x)} - 2s(x)\sin\theta(x)\cos\theta(x) + \bigl(q(x) - \lambda\omega(x)\bigr)\sin^2\theta(x).
\]
Here we view $f_\lambda(x, \theta)$ as a function defined on $[a,b] \times \mathbb{R}$, with $\theta$ treated as an independent variable (not yet a solution).
We now compare $f_\mu$ and $f_\lambda$. For any point $(x, \theta) \in [a,b] \times \mathbb{R}$,
\begin{align*}
f_\mu(x, \theta) - f_\lambda(x, \theta) 
&= \bigl(q(x) - \mu\omega(x)\bigr)\sin^2\theta - \bigl(q(x) - \lambda\omega(x)\bigr)\sin^2\theta \\
&= (\lambda - \mu)\omega(x)\sin^2\theta.
\end{align*}
Since $\mu > \lambda$, $\omega(x) > 0$, and $\sin^2\theta \geq 0$, we conclude that
\[
f_\mu(x, \theta) \leq f_\lambda(x, \theta) \quad \text{for all } (x, \theta) \in [a,b] \times \mathbb{R},
\]
with strict inequality whenever $\sin\theta \neq 0$.
Now, by the standard comparison theorem for scalar first-order ODEs \cite{coddington1955}, if two solutions $\theta_\mu$ and $\theta_\lambda$ satisfy $\theta_\mu' = f_\mu(x, \theta_\mu)$ and $\theta_\lambda' = f_\lambda(x, \theta_\lambda)$ with $\theta_\mu(a) = \theta_\lambda(a)$, and if $f_\mu(x, \theta) \leq f_\lambda(x, \theta)$ for all $(x, \theta)$, then
\[
\theta_\mu(x) \leq \theta_\lambda(x) \quad \text{for all } x \geq a.
\]
Moreover, since the inequality $f_\mu < f_\lambda$ is strict whenever $\sin\theta \neq 0$, we obtain the strict inequality $\theta_\mu(x) < \theta_\lambda(x)$ for all $x > a$. Note that since $\theta'|_{\theta=0} = -1/p(x) < 0$, the phase functions are decreasing; hence $\theta_\mu < \theta_\lambda$ implies that $\theta_\mu$ reaches each successive multiple of $\pi$ at a smaller value of $x$.

The zeros of $y(x) = r(x)\sin(\theta(x))$ occur precisely when $\theta(x) \in \pi\mathbb{Z}$. Since $\theta_\mu(x) < \theta_\lambda(x)$ for $x > a$ and both phase functions are decreasing, $\theta_\mu$ reaches each successive negative multiple of $\pi$ strictly before $\theta_\lambda$ does. Defining
\[
x_k(\lambda) := \min\{x > a : \theta_\lambda(x) = \alpha - k\pi\}, \quad x_k(\mu) := \min\{x > a : \theta_\mu(x) = \alpha - k\pi\},
\]
we conclude that $x_k(\mu) < x_k(\lambda)$ for every $k \in \mathbb{N}$.
\end{proof}

\subsection{Eigenvalue Bounds}

The Pr\"ufer framework enables precise bounds on eigenvalues through variational techniques. To analyze the eigenvalues of the boundary value problem~\eqref{14} with Dirichlet boundary conditions $y(a) = y(b) = 0$, we first derive the associated Rayleigh quotient. Multiplying~\eqref{14} by $y$ and integrating by parts yields
\[
\mathcal{R}[y] = \frac{\displaystyle\int_a^b p(x)\bigl(y'(x) + s(x)y(x)\bigr)^2\,\mathrm{d}x + \int_a^b q(x)y^2(x)\,\mathrm{d}x}{\displaystyle\int_a^b \omega(x)y^2(x)\,\mathrm{d}x}.
\]
Introducing the substitution $v(x) := e^{\bar{s}(x)}y(x)$, where $\bar{s}(x) = \int_a^x s(t)\,\mathrm{d}t$, we have $y = e^{-\bar{s}}v$ and $y' + sy = e^{-\bar{s}}v'$. Under this change of variables, the Rayleigh quotient transforms to
\begin{equation}\label{RayleightQuotient}
\mathcal{R}[v] = \frac{\displaystyle\int_a^b \tilde{p}(x)(v'(x))^2\,\mathrm{d}x + \int_a^b Q(x)v^2(x)\,\mathrm{d}x}{\displaystyle\int_a^b \tilde{\omega}(x)v^2(x)\,\mathrm{d}x},
\end{equation}
where the transformed coefficient functions are
\begin{align}
\tilde{p}(x) &:= p(x)e^{-2\bar{s}(x)}, \label{eq:ptilde}\\
Q(x) &:= q(x)e^{-2\bar{s}(x)}, \label{associatedcoefficient}\\
\tilde{\omega}(x) &:= \omega(x)e^{-2\bar{s}(x)}. \label{eq:omegatilde}
\end{align}
Note that in the ratio $\frac{Q(x)}{\tilde{\omega}(x)}$, the exponential factors cancel:
\[
\frac{Q(x)}{\tilde{\omega}(x)} = \frac{q(x)e^{-2\bar{s}(x)}}{\omega(x)e^{-2\bar{s}(x)}} = \frac{q(x)}{\omega(x)}.
\]
Thus, the constants $m$ and $M$ appearing in the bounds below depend only on the original coefficients $q$ and $\omega$. Importantly, the change of variables $v = e^{\bar{s}}y$ transforms the original non-self-adjoint problem~\eqref{14} into the self-adjoint Dirichlet problem with Rayleigh quotient~\eqref{RayleightQuotient}, allowing us to apply variational methods to derive lower and upper bounds for the eigenvalues $\lambda_n$.

\begin{remark}{\rm The proof of the following theorems relies on the minimax principle, also known as the Courant--Fischer theorem~\cite{fischer1905,courant1920}. For a self-adjoint boundary value problem with Rayleigh quotient $\mathcal{R}[v]$, the $n^{\text{th}}$ eigenvalue $\lambda_n$ satisfies
\[
\lambda_n = \min_{\substack{S \subset H_0^1(a,b) \\ \dim(S) = n}} \max_{\substack{v \in S \\ v \neq 0}} \mathcal{R}[v] = \max_{\substack{T \subset H_0^1(a,b) \\ \dim(T) = n-1}} \min_{\substack{v \in T^\perp \\ v \neq 0}} \mathcal{R}[v],
\]
where $T^\perp$ denotes the orthogonal complement of the subspace $T$ in $H_0^1(a,b)$.}
\end{remark}

\begin{theorem}[Lower bound on the $n^{\text{th}}$ eigenvalue]
\label{th:lower-bound}
Let $\tilde{p}(x) = p(x)e^{-2\bar{s}(x)}$ and $\tilde{\omega}(x) = \omega(x)e^{-2\bar{s}(x)}$, where $\bar{s}(x) = \int_a^x s(t)\,\mathrm{d}t$. Suppose $f \in C([a,b])$ satisfies $0 < f(x) \leq \tilde{p}(x)$ for all $x \in [a,b]$, and there exists a constant $c > 0$ such that $\tilde{\omega}(x)f(x) < c$ on $[a,b]$. Define
\[
m := \min_{x \in [a,b]} \frac{q(x)}{\omega(x)}.
\]
Then the $n^{\text{th}}$ eigenvalue of~\eqref{14} satisfies
\[
\lambda_n \geq \frac{n^2\pi^2}{c\left(\displaystyle\int_a^b \frac{1}{f(x)}\,\mathrm{d}x\right)^2} + m, \qquad n = 1, 2, 3, \ldots
\]
\end{theorem}

\begin{proof}
Write the Rayleigh quotient~\eqref{RayleightQuotient} as
\[
\mathcal{R}[v] = \frac{\displaystyle\int_a^b \tilde{p}(v')^2\,\mathrm{d}x}{\displaystyle\int_a^b \tilde{\omega}\,v^2\,\mathrm{d}x} + \frac{\displaystyle\int_a^b Q\,v^2\,\mathrm{d}x}{\displaystyle\int_a^b \tilde{\omega}\,v^2\,\mathrm{d}x}.
\]

Since 
\[
m = \min_{x \in [a,b]} \frac{Q(x)}{\tilde{\omega}(x)} = \min_{x \in [a,b]} \frac{q(x)}{\omega(x)},
\]
we have $Q(x) \geq m\,\tilde{\omega}(x)$ for all $x \in [a,b]$, which gives
which gives
\[
\frac{\displaystyle\int_a^b Q\,v^2\,\mathrm{d}x}{\displaystyle\int_a^b \tilde{\omega}\,v^2\,\mathrm{d}x} \geq m.
\]

Moreover, since $f \leq \tilde{p}$, we have
\[
\frac{\displaystyle\int_a^b \tilde{p}(v')^2\,\mathrm{d}x}{\displaystyle\int_a^b \tilde{\omega}\,v^2\,\mathrm{d}x} \geq \frac{\displaystyle\int_a^b f(v')^2\,\mathrm{d}x}{\displaystyle\int_a^b \tilde{\omega}\,v^2\,\mathrm{d}x}.
\]
Now, consider the self-adjoint comparison problem
\begin{equation}\label{comparison}
-(f(x)\varphi'(x))' = \tilde{\lambda}\,\frac{\varphi(x)}{f(x)}, \qquad \varphi(a) = \varphi(b) = 0,
\end{equation}
whose Rayleigh quotient is
\[
\tilde{\mathcal{R}}[\varphi] = \frac{\displaystyle\int_a^b f(\varphi')^2\,\mathrm{d}x}{\displaystyle\int_a^b \frac{\varphi^2}{f}\,\mathrm{d}x}.
\]
To determine the eigenvalues of the comparison problem~\eqref{comparison}, we apply the Liouville substitution
\[
u = \int_a^x \frac{\mathrm{d}t}{f(t)}.
\]
Then $\frac{\mathrm{d}u}{\mathrm{d}x} = \frac{1}{f(x)}$, and by the chain rule we obtain
\[
\varphi'(x)
= \frac{\mathrm{d}\varphi}{\mathrm{d}u}\,\frac{\mathrm{d}u}{\mathrm{d}x}
= \frac{1}{f(x)}\frac{\mathrm{d}\varphi}{\mathrm{d}u}.
\]
Consequently,
\[
f(x)\varphi'(x) = \frac{\mathrm{d}\varphi}{\mathrm{d}u},
\qquad
\bigl(f(x)\varphi'(x)\bigr)'
= \frac{\mathrm{d}}{\mathrm{d}x}\left(\frac{\mathrm{d}\varphi}{\mathrm{d}u}\right)
= \frac{1}{f(x)}\frac{\mathrm{d}^2\varphi}{\mathrm{d}u^2}.
\]
Substituting these expressions into the differential equation~\eqref{comparison} yields
\[
-\frac{1}{f(x)}\frac{\mathrm{d}^2\varphi}{\mathrm{d}u^2}
= \frac{\tilde{\lambda}}{f(x)}\varphi,
\]
which simplifies to the constant-coefficient equation
\[
\frac{\mathrm{d}^2\varphi}{\mathrm{d}u^2} + \tilde{\lambda}\varphi = 0.
\]
Under this change of variables, the boundary conditions $\varphi(a)=\varphi(b)=0$ transform to
\[
\varphi(0)=0,
\qquad
\varphi(L)=0,
\]
where
\[
L=\int_a^b \frac{\mathrm{d}t}{f(t)}.
\]
This is the classical Dirichlet eigenvalue problem for the second derivative on the interval $(0,L)$, whose eigenvalues are
\[
\tilde{\lambda}_n = \frac{n^2\pi^2}{L^2}
= \frac{n^2\pi^2}{\left(\displaystyle\int_a^b \frac{1}{f(x)}\,\mathrm{d}x\right)^2},
\qquad n=1,2,3,\ldots
\]
Combining the preceding estimates with the assumption $\tilde{\omega}(x)f(x)<c$, we have shown that for every
$v\in H_0^1(a,b)\setminus\{0\}$,
\[
\mathcal{R}[v]
\ge \frac{1}{c}\,\tilde{\mathcal{R}}[v] + m,
\]
where
\[
\tilde{\mathcal{R}}[v]
= \frac{\displaystyle\int_a^b f(x)(v'(x))^2\,\mathrm{d}x}
{\displaystyle\int_a^b \frac{v(x)^2}{f(x)}\,\mathrm{d}x}
\]
is the Rayleigh quotient associated with the comparison problem~\eqref{comparison}.
By the standard eigenvalue comparison principle, if $\mathcal{R}_1[v]\ge \mathcal{R}_2[v]$ for all admissible $v$,
then the $n$-th eigenvalue of the first problem is at least as large as the the $n$-th eigenvalue of the second. Applying this principle with $\mathcal{R}_1=\mathcal{R}$ and
$\mathcal{R}_2[v]=\frac{1}{c}\tilde{\mathcal{R}}[v]+m$, and using the fact that the $n$-th eigenvalue of the
comparison problem~\eqref{comparison} is $\tilde{\lambda}_n$, we conclude that
\[
\lambda_n \ge \frac{\tilde{\lambda}_n}{c}+m
= \frac{n^2\pi^2}{c\left(\displaystyle\int_a^b \frac{1}{f(x)}\,\mathrm{d}x\right)^2}+m.
\] \qedhere
\end{proof}

\begin{theorem}[Upper bound on the $n^{\text{th}}$ eigenvalue]
\label{th:upper-bound}
Let $\tilde{p}(x) = p(x)e^{-2\bar{s}(x)}$ and $\tilde{\omega}(x) = \omega(x)e^{-2\bar{s}(x)}$, where $\bar{s}(x) = \int_a^x s(t)\,\mathrm{d}t$. Suppose $h \in C([a,b])$ satisfies $h(x) \geq \tilde{p}(x)$ for all $x \in [a,b]$, and define
\[
D := \min_{x \in [a,b]} \tilde{\omega}(x)h(x) > 0.
\]
Let
\[
M := \max_{x \in [a,b]} \frac{q(x)}{\omega(x)}.
\]
Then the $n^{\text{th}}$ eigenvalue of~\eqref{14} satisfies
\[
\lambda_n \leq \frac{n^2\pi^2}{D\left(\displaystyle\int_a^b \frac{1}{h(x)}\,\mathrm{d}x\right)^2} + M, \qquad n = 1, 2, 3, \ldots
\]
\end{theorem}

\begin{proof}
We again write the Rayleigh quotient~\eqref{RayleightQuotient} in the form
\[
\mathcal{R}[v]
= \frac{\displaystyle\int_a^b \tilde{p}\,(v')^2\,\mathrm{d}x}
       {\displaystyle\int_a^b \tilde{\omega}\,v^2\,\mathrm{d}x}
+ \frac{\displaystyle\int_a^b Q\,v^2\,\mathrm{d}x}
       {\displaystyle\int_a^b \tilde{\omega}\,v^2\,\mathrm{d}x}.
\]
Since
\[
M=\max_{x\in[a,b]}\frac{Q(x)}{\tilde{\omega}(x)}
   =\max_{x\in[a,b]}\frac{q(x)}{\omega(x)},
\]
we have $Q(x)\le M\,\tilde{\omega}(x)$ for all $x\in[a,b]$. Consequently,
\[
\frac{\displaystyle\int_a^b Q\,v^2\,\mathrm{d}x}
     {\displaystyle\int_a^b \tilde{\omega}\,v^2\,\mathrm{d}x}
\le M.
\]
Moreover, since $h\ge\tilde{p}$ on $[a,b]$, we have
\[
\frac{\displaystyle\int_a^b \tilde{p}\,(v')^2\,\mathrm{d}x}
     {\displaystyle\int_a^b \tilde{\omega}\,v^2\,\mathrm{d}x}
\le
\frac{\displaystyle\int_a^b h\,(v')^2\,\mathrm{d}x}
     {\displaystyle\int_a^b \tilde{\omega}\,v^2\,\mathrm{d}x}.
\]

We now consider the self-adjoint comparison problem
\begin{equation}\label{comparison_upper}
-(h(x)\psi'(x))' = \mu\,\frac{\psi(x)}{h(x)},
\qquad
\psi(a)=\psi(b)=0.
\end{equation}
Multiplying by $\psi$ and integrating by parts yields the Rayleigh quotient
\[
\tilde{\mathcal{R}}[\psi]
=
\frac{\displaystyle\int_a^b h\,(\psi')^2\,\mathrm{d}x}
     {\displaystyle\int_a^b \frac{\psi^2}{h}\,\mathrm{d}x}.
\]
Applying the Liouville substitution
\[
u=\int_a^x \frac{\mathrm{d}t}{h(t)}
\]
reduces this problem to the Dirichlet eigenvalue problem for the second derivative on the interval
\(
(0,L)
\),
where
\(
L=\int_a^b \frac{\mathrm{d}t}{h(t)}.
\)
The corresponding eigenvalues are
\[
\mu_n
=
\frac{n^2\pi^2}{L^2}
=
\frac{n^2\pi^2}
{\left(\displaystyle\int_a^b \frac{1}{h(x)}\,\mathrm{d}x\right)^2},
\qquad n=1,2,3,\ldots
\]
The hypothesis $\tilde{\omega}h\ge D$ implies
\[
\tilde{\omega}\ge \frac{D}{h} \quad \text{for all } x\in[a,b].
\]
Therefore,
\[
\int_a^b \tilde{\omega}\,v^2\,\mathrm{d}x
\ge
D\int_a^b \frac{v^2}{h}\,\mathrm{d}x,
\]
and it follows that
\[
\frac{\displaystyle\int_a^b h\,(v')^2\,\mathrm{d}x}
     {\displaystyle\int_a^b \tilde{\omega}\,v^2\,\mathrm{d}x}
\le
\frac{1}{D}
\frac{\displaystyle\int_a^b h\,(v')^2\,\mathrm{d}x}
     {\displaystyle\int_a^b \frac{v^2}{h}\,\mathrm{d}x}=\dfrac{\tilde{\mathcal{R}}[v]}{D}
\]
where $\tilde{\mathcal{R}}[v]$ is the Rayleigh quotient for the comparison problem~\eqref{comparison_upper}. 

Let $\psi_1,\dots,\psi_n$ denote the first $n$ eigenfunctions of ~\eqref{comparison_upper} and define
\[
V=\mathrm{span}\{\psi_1,\dots,\psi_n\}.
\]
Then $\dim V=n$, and for every $v\in V\setminus\{0\}$ we have
\(
\tilde{\mathcal{R}}[v]\le\mu_n
\).
Combining the previous inequalities, it follows that
\[
\mathcal{R}[v]
\le
\frac{1}{D}\,\tilde{\mathcal{R}}[v] + M
\le
\frac{\mu_n}{D} + M,
\qquad v\in V\setminus\{0\}.
\]
Taking the maximum over $v\in V\setminus\{0\}$ and invoking the minimax characterization of eigenvalues yields
\[
\lambda_n
=
\min_{\substack{S\subset H_0^1(a,b)\\ \dim S=n}}
\max_{\substack{v\in S\\ v\neq 0}}
\mathcal{R}[v]
\le
\max_{\substack{v\in V\\ v\neq 0}}
\mathcal{R}[v]
\le
\frac{\mu_n}{D} + M.
\]
Substituting the expression for $\mu_n$ gives
\[
\lambda_n
\le
\frac{n^2\pi^2}
     {D\left(\displaystyle\int_a^b \frac{1}{h(x)}\,\mathrm{d}x\right)^2}
+ M,
\]
which completes the proof.
\qedhere
\end{proof}

\subsection{The phase function criterion}
Perhaps the most elegant application of the Pr\"ufer method is the complete characterization of eigenvalues through the phase function. At this point, it is appropriate to explain why the function space \( H_0^1(a,b) \) is used in the application of the minimax principle. This Sobolev space consists of all functions \( v \in H^1(a,b) \) that vanish at the endpoints, i.e.\ \( v(a) = v(b) = 0 \). This is the natural setting for the variational formulation of the eigenvalue problem since it ensures the square-integrability of both \( v \) and \( v' \), making the Rayleigh quotient well-defined. Moreover, \( H_0^1(a,b) \) reflects the boundary conditions and carries a Hilbert space structure under which the self-adjoint differential operator admits a discrete spectrum. The minimax principle characterizes the \(n\)-th eigenvalue as the smallest maximum of the Rayleigh quotient over all \(n\)-dimensional subspaces of \( H_0^1(a,b) \), justifying its central role in the proof.

\begin{theorem}[Eigenvalue criterion via the phase function]
\label{thm:phasefunction}
Let $p, s, q, \omega \in C([a,b])$ with $p > 0$ and $\omega > 0$ on $[a,b]$. For each $\lambda \in \mathbb{R}$, let $\theta(x; \lambda)$ denote the solution to the initial value problem
\begin{equation}\label{phase_lambda}
\begin{aligned}
\theta'(x)
={}&
-\frac{\cos^2\theta(x)}{p(x)}
- 2s(x)\sin\theta(x)\cos\theta(x) \\
&\quad
+ \bigl(q(x)-\lambda\omega(x)\bigr)\sin^2\theta(x),
\end{aligned}
\end{equation}
\[
\theta(a;\lambda)=0.
\]
and let $r(x; \lambda)$ denote the solution to
\begin{equation}\label{amp_lambda}
\begin{aligned}
r'(x)
={}&
r(x)\Bigg[
-\frac{\cos\theta(x)\,\sin\theta(x)}{p(x)}
- s(x)\sin^2\theta(x)
+ s(x)\cos^2\theta(x)\\
&\qquad
- \bigl(q(x)-\lambda\omega(x)\bigr)\sin\theta(x)\cos\theta(x)
\Bigg],
\end{aligned}
\end{equation}
\[
r(a;\lambda)=r_0>0.
\]
Then $\lambda$ is an eigenvalue of the boundary value problem~\eqref{14} with Dirichlet conditions $y(a) = y(b) = 0$ if and only if $\theta(b; \lambda) = n\pi$ for some positive integer $n \in \mathbb{N}$. In this case, the corresponding eigenfunction is $y(x) = r(x; \lambda)\sin\theta(x; \lambda)$.
\end{theorem}

\begin{proof}
Consider the eigenvalue problem~\eqref{14} with Dirichlet boundary conditions
\[
-[p(x)(y' + sy)]' + sp(y' + sy) + qy = \lambda\omega y, \quad y(a) = 0, \quad y(b) = 0.
\]
By Theorem~\ref{thm:equivalence}, any nontrivial solution $y(x)$ of this problem admits a Pr\"ufer representation of the form \eqref{2}, \eqref{3}, where the amplitude $r(x)$ is strictly positive on $[a,b]$ and satisfies~\eqref{7}, while the phase $\theta(x)$ satisfies~\eqref{8}. Replacing $q$ by $q - \lambda\omega$ in these equations yields the phase equation~\eqref{phase_lambda} and the amplitude equation~\eqref{amp_lambda} corresponding to the eigenvalue parameter $\lambda$. 

Since  $r(x)>0$ for all $x \in [a,b]$ \eqref{2} shows that 
\[
y(x) = 0 \quad \text{if and only if} \quad \sin\theta(x) = 0
\]
which is equivalent to $\theta(x) \in \pi\mathbb{Z}$. In particular, the boundary condition $y(a) = 0$ implies $\theta(a) \in \pi\mathbb{Z}$. We fix the normalization by choosing
\[
\theta(a; \lambda) = 0.
\]
With this normalization, the boundary condition $y(b) = 0$ is equivalent to
\[
\theta(b; \lambda) = n\pi
\]
for some integer $n$. 

If $\theta(b; \lambda) = 0$, then $\theta(x; \lambda) \equiv 0$ would be a solution only if the right-hand side of~\eqref{phase_lambda} vanishes identically when $\theta = 0$. However, evaluating the right-hand side of the phase equation~\eqref{phase_lambda} at $\theta=0$ gives 
\[
\theta'(x)\big|_{\theta=0} = -\frac{1}{p(x)} \neq 0,
\]
which is nonzero for all $x \in [a,b]$. Hence $\theta$ remain identically zero, and therefore 
$\theta(b; \lambda) = 0$ is impossible when $b > a$. 

The integer $n$ must be positive.  If $\theta(a) = 0$ and $\theta(b) = n\pi$ with $n < 0$ then replacing $y$ by $-y$ , which is also an eigenfunction corresponding to the same eigenvalue, shifts the phase by $\pi$ and produces a positive integer. Thus we may restrict attention to $n \in \mathbb{N}$. 

Suppose now that $\theta(b; \lambda) = n\pi$ for some $n \in \mathbb{N}$. Let $\theta(x) = \theta(x; \lambda)$ be the solution of the phase equation~\eqref{phase_lambda} with initial condition $\theta(a) = 0$. Solve the amplitude equation~\eqref{amp_lambda} with an initial condition $r(a) = r_0 > 0$. Since the amplitude equation is linear in $r$, positivity of the initial value implies $r(x) > 0$ for all $x \in [a,b]$. Define
\[
y(x) = r(x)\sin\theta(x).
\]
By Theorem~\ref{thm:equivalence}, this function $y$ solves the differential equation~\eqref{14}. Moreover,
\[
y(a) = r(a)\sin(0) = 0, \qquad y(b) = r(b)\sin(n\pi) = 0,
\]
so the Dirichlet boundary conditions are satisfied. The function $y$  is nontrivial because $\theta$ varies continuously from $0$ to $n\pi$ with $n \geq 1$, and therefore there exist points $x \in (a,b)$ for which $\theta(x) \notin \pi\mathbb{Z}$. At such points $\sin\theta(x) \neq 0$, and since $r(x) > 0$, we have $y(x) \neq 0$.  Thus $\lambda$ is an eigenvalue.

Conversely, suppose that $\lambda$ is an eigenvalue with a corresponding nontrivial eigenfunction $y$. By Theorem~\ref{thm:equivalence}, $y$ admits a Pr\"ufer representation $y = r\sin\theta$ with $r(x) > 0$ on $[a,b]$. The boundary conditions $y(a) = 0$ and $y(b) = 0$ imply that $\theta(a), \theta(b) \in \pi\mathbb{Z}$. With the normalization $\theta(a) = 0$, it follows that $\theta(b) = n\pi$ for some integer $n$. Since $y$ is nontrivial, the case $n=0$ is excluded, and by convention we take $n > 0$. This completes the proof.
\end{proof}

This criterion transforms the eigenvalue problem into a root-finding problem: eigenvalues are precisely those values of $\lambda$ for which $\theta(b; \lambda)$ equals an integer multiple of $\pi$. This provides both theoretical insight and computational advantages for numerical eigenvalue algorithms.

The elegance of this approach lies in its reduction of a two-point boundary value problem to the solution of an initial value problem, making it particularly suitable for numerical implementation and theoretical analysis.

\section{Conclusion}
The theoretical contribution of this work opens several promising avenues for future research, including numerical implementation of this generalized Pr\"ufer transformation, numerical analysis of convergence properties for eigenvalue computation in non-self-adjoint settings, and computational validation of the eigenvalue bounds derived in Theorems \ref{th:lower-bound} and \ref{th:upper-bound} across various parameter regimes.

A natural and significant extension of this work involves adapting the generalized Pr\"ufer transformation to dynamic equations on time scales. Such an extension is particularly compelling because time scales provide a unified framework that encompasses both continuous and discrete settings, potentially revealing new connections between differential and difference equations in spectral theory. The time scale approach may also yield computational advantages in certain applications where the underlying physical system naturally evolves on a non-uniform time domain.

\section*{Acknowledgements}
The author (S. Bandyopadhyay) was supported by the AMS-Simons Travel Grants program. This support enabled collaborative research visits that were instrumental in this paper.

{\footnotesize  
\medskip
\medskip
\vspace*{1mm} 
 
\noindent {\it Shalmali Bandyopadhyay}\\  
The University of Tennessee at Martin\\
Martin, TN\\
E-mail: {\tt sbandyo5@utm.edu}\\ \\  

\noindent {\it F. Ay\c{c}a \c{C}etinkaya}\\  
The University of Tennessee at Chattanooga\\
Chattanooga, TN\\
E-mail: {\tt fatmaayca-cetinkaya@utc.edu}\\ \\

\noindent {\it Tom Cuchta}\\  
Marshall University\\
Huntingdon, WV\\
E-mail: {\tt cuchta@marshall.edu}\\ \\

\end{document}